\documentclass[reqno,12pt,a4paper]{amsart}
\usepackage{amsmath,amssymb,amsthm,amscd,amssymb,amsfonts,hyperref}
\usepackage[english]{babel}
\usepackage[mathscr]{eucal}




\setcounter{secnumdepth}{2}

\setcounter{tocdepth}{2}


\numberwithin{equation}{section}




\renewcommand{\a}{\alpha}
\renewcommand{\b}{\beta}
\newcommand{\g}{\gamma}

\renewcommand{\d}{\delta}

\newcommand{\e}{\epsilon}

\newcommand{\z}{\zeta}

\renewcommand{\l}{\lambda}

\newcommand{\m}{\mu}
\newcommand{\n}{\nu}

\newcommand{\h}{\chi}

\renewcommand{\o}{\omega}
\renewcommand{\O}{\Omega}



\newcommand{\C}{{\mathbb C}}
\newcommand{\R}{{\mathbb R}}
\newcommand{\Z}{{\mathbb Z}}







\newcommand{\Fc}{{\mathcal F}}














\DeclareMathOperator{\rank}{rank}




















\newtheorem{theorem}{Theorem}[section]
\newtheorem{proposition}[theorem]{Proposition}
\newtheorem{lemma}[theorem]{Lemma}

\theoremstyle{definition}

\theoremstyle{remark}

\begin{document}

\title[Finite rank Toeplitz operators in many dimensions]{Finite rank Bergman-Toeplitz
and Bargmann-Toeplitz operators in many dimensions}
\author[Rozenblum]{Grigori Rozenblum}
\address[G. Rozenblum]{Department of Mathematics \\
                          Chalmers University of Technology \\
                          and Department of Mathematics University of Gothenburg \\
                          Chalmers Tv\"argatan, 3, S-412 96
                           Gothenburg
                          Sweden}
\email{grigori@math.chalmers.se}
\author[Shirokov]{Nikolai Shirokov}
\address[N. Shirokov]{Department of Mathematical Analysis
\\Faculty of Mathematics and Mechanics\\
                        St. Petersburg State University\\
                        2, Bibliotechnaya pl.,
                         Petrodvorets\\
St.Petersburg, 198504, Russia}
\email{matan@math.spbu.ru}
\begin{abstract}
The recent theorem by D. Luecking that finite rank
Toeplitz-Bergman operators must be generated by a
measure consisting of finitely many point masses is
carried over to the many-dimensional case.
\end{abstract}
\keywords{Bergman spaces, Bargmann spaces, Toeplitz
operators}
\date{}

\maketitle


\section{Introduction and the main result}
\label{intro}
 Toeplitz operators play
an important role in many branches of analysis. For
Toeplitz operators, as well as for Hankel operators of
some types, the following cut-off property is often
encountered: if the operator of a certain class is too
`small' then it must have a rather special form or
even be zero. `Smallness' is measured most often in
the terms of compactness or membership in Schatten
classes.  One of such cut-off questions, about
Toeplitz-Bergman and Toeplitz-Bargmann operators  has
been under discussion for rather long time. Suppose
that such operator, with a certain weight measure, has
finite rank. What can one say about the measure?

We identify $\R^{2d}$ and $\C^d$ in the standard way:
$$
\C\ni z_j=x_j+iy_j\sim (x_j,y_j)\in \R^2,\
j=1,2,\dots,d; \ z=(z_1,\dots,z_d)\in\C^d.
$$
Let $d m(z)$ be the Lebesgue measure in $\C^d$ and
$d\g(z)=(2\pi)^{-d}e^{-|z|^2/2}dm(z)$. In the space
$L^2=L^2(\C^d,\m)$, the closed subspace $\Fc^2$ of
entire analytic functions is considered. Denote by $P$
the orthogonal projection in $L^2$ onto $\Fc^2$. This
is an integral operator  with the kernel
\begin{equation}\label{1:Bkernel}P(z,w)=\exp(z\bar{w}/2).\end{equation}

Next, let $V$ be a bounded complex  valued function on
$\C^d$, referred to as the weight function in what
follows. The Toeplitz operator we are interested in
(we call it Toeplitz-Bargmann operator) is defined by
$$
     T_V^\Fc:\Fc^2\to\Fc^2, \; T_V:u\mapsto PVu.
$$
Equivalently, the operator $T_V$ in $\Fc^2$ is defined
in terms of its sesquilinear form:
\begin{equation}\label{1:BargForm}
(T_V^\Fc u,v)_{\Fc^2}=
\int_{\C}V(z)u(z)\overline{v(z)}d\g(z), \quad u,v\in
\Fc^2.
\end{equation}

The operator $T_V^\Fc$ can be also expressed as an
integral operator of the form
\begin{equation}\label{1:bargkernel}
    (T_V^\Fc u)(z)=\int_{\C^d}u(w)P(z,w)V(w)d\g(w),
\end{equation}
where $P(z,w)$ is the kernel \eqref{1:Bkernel}. This
latter expression opens the possibility to consider
Toeplitz operators with a measure playing the role of
the weight function. In $\m$ is a complex regular
Borel measure on $\C^d$, having a bounded support, we
can consider the operator $T_\m^\Fc$ defined by the
relation
\begin{equation}\label{1:BargMeas}
(T_\m^\Fc u)(z)=\int_{\C^d}u(w)P(z,w)d\mu(w).
\end{equation}
The operator \eqref{1:BargMeas} takes the form
\eqref{1:bargkernel} if the measure $\mu$ is
absolutely continuous with respect to $\g$, with $V$
being the derivative $\frac{d\mu}{d\g}$.

Along with Toeplitz-Bargmann operators we consider
Toeplitz-Berg/-man operators. Let $\O$ be a bounded
connected domain in $\C^d$ and $B(z,w)$ be its Bergman
kernel, the integral kernel of the projection $P_{\O}$
of the space $L^2(\O)$ onto the Bergman space
$H^2(\O)$ consisting of holomorphic functions in
$L^2(\O)$. For a complex regular Borel measure $\mu$
on $\O$, having compact support in $\O$, we consider
the Toeplitz-Bergman operator $T^\O_\m$
\begin{equation}\label{1:BergT}
    (T^\O_\m u)(z)=\int B(z,w)u(z)d\mu(z), \ u\in H^2(\O).
\end{equation}
The operator can be described by the quadratic form
\begin{equation}\label{1:BergQform}
    (T^\O_\m u,v)=\int u(z)\overline{v(z)}d\mu(z)
\end{equation}
Again, if the weight measure $\mu$ is absolutely
continuous with respect to Lebesgue measure with
derivative $V$, the operator, denoted now by $T^\O_V$,
can be represented in the standard Toeplitz form
\begin{equation}\label{1:BergTProj}
T^\O_V u=PVu.
\end{equation}

It is clear that if the measure $\m$ is a linear
combination of finitely many point masses,
$\m=\sum_{k=1}^N \l_k \d(z-z_k)$, $\l_k\ne0$ then
 the operators
$T_\m^\Fc, \ T_\m^\O$ have finite rank, with the range
coinciding with the linear span of functions
$P(z,z_k)$, resp., $B(z,z_k)$,

\begin{equation}\label{rank}
    T_\m^\O u(z)=\sum B(z,w_k)\l_k u(w_k).
\end{equation}
 So, the natural question arises wether the converse
is true: if the operators have finite rank does this
imply that the measure consists only of a finite set
of point masses. For  absolutely continuous measures
this would mean that finite rank Toeplitz operators
must be zero.

The result in the case  of rank zero is folklore. If
the operator  (Bargmann or Bergman) is zero than the
measure should be zero, and it follows easily from
Stone-Weierstrass theorem. In 1987 in \cite{Lue} a
proof was proposed of the finite rank conjecture for
$d=1$, but it was seriously flowed. In 2002 an attempt
to prove this conjecture, again for $d=1$, was made by
N. Das in \cite{Das},  however, again, with
incorrigible mistakes, see the review \cite{Englis}.

The authors became interested in the finite rank
conjecture due to its relation to the study of the
spectral properties of the  perturbed Landau
Hamiltonian. The unperturbed Landau operator describes
the movement of a charged quantum particle confined to
a plane under the action of a uniform magnetic field.
This operator has spectrum consisting of Landau
levels, infinitely degenerated eigenvalues placed at
the points of an arithmetic progression. The
corresponding eigenspaces are explicitly expressed via
the Bargmann space. When the Landau operator is
perturbed by a compactly supported (or fast decaying)
electrostatic potential or magnetic field the Landau
levels split into clusters of eigenvalues, having
Landau levels as their only limit points. The
distribution of perturbed eigenvalues in  clusters is
essentially governed by the spectrum of
Toeplitz-Bargmann operators with weight function $V$
expressed in an explicit way in the terms of the
perturbation. Many results in this direction have been
obtained in \cite{RaiWar}, \cite{MelRoz},
\cite{RozTa}, \cite{RozSob} and other publications. In
particular, simple operator-theoretical arguments,(see
e.g. \cite[Proposition 4.1]{RaiWar}) show that the
Landau level is, in fact, the accumulation point of a
cluster if and only if $T_V^B$ has infinite rank. So,
if $T_V^B$ has finite rank, the Landau level remains,
even after the perturbation, being an isolated
eigenvalue of infinite multiplicity. The affirmative
answer to the finite rank conjecture would mean that
under a non-zero perturbation the Landau levels
necessarily split into infinite clusters.

The authors, together with A. Pushnitsky, spent some
time in 2005, trying to prove the conjecture. Certain
partial results were obtained, and a text was in
preparation, when a beautiful proof by Dan Luecking
\cite{Lue2} appeared. In that paper, in  the case
$d=1$, the finite rank conjecture was proved for the
operators $T_\mu^B$ and $T_\mu^\O$ without any extra
conditions. Being quite impressed, we stopped our
work.

A year later, the authors decided to return to the
problem, in its multi-dimensional setting. Besides the
natural curiosity, we were moved by fact  that the
proof in \cite{Lue2} used essentially the specifics of
the case $d=1$, while the higher-dimensional case is
also of interest for applications, see, e.g.,
\cite{MelRoz}. As a result we managed to carry over
the result of \cite{Lue2} to the multi-dimensional
case in its  full generality.

\begin{theorem}\label{Theorem} Suppose that  for a
certain finite measure $\mu$, compactly supported in
$\C^d$ or in $\O$, the corresponding operator
$T_\mu^\Fc$, resp., $T_\mu^\O$ has finite rank $N$.
Then the measure $\mu$ is a linear combination of $N$
point masses. In particular, if the measure $\m$ is
absolutely continuous with respect to Lebesgue measure
them it is zero.
\end{theorem}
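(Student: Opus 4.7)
\smallskip
\noindent\textbf{Proof plan.}
My overall strategy would be to first reduce the Bergman statement to the Bargmann statement, and then attack the Bargmann case by combining the one-dimensional theorem of Luecking with an induction on the dimension $d$. For the first reduction, note that on a small polydisc centered at any point $w_0$ in the (compact) support of $\mu$, the Bergman kernel $B(z,w)$ of $\O$ has an asymptotic expansion whose leading term is, up to a nonvanishing holomorphic factor and a linear change of coordinates, the Bargmann kernel \eqref{1:Bkernel} evaluated on a rescaled ball. Since the finite-rank property of $T_\mu^\O$ is preserved under multiplication by nonvanishing bounded holomorphic factors and under compactly supported localization (because the resulting operators differ by finite-rank corrections), one obtains a Bargmann-Toeplitz operator $T_{\mu'}^\Fc$ of finite rank associated with a compactly supported measure $\mu'$ that inherits the support structure of $\mu$. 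Thus it suffices to prove the theorem in the Bargmann setting.

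In the Bargmann case, I would encode $\mu$ by its exponential transform
$$
F(\zeta,\eta)=\int_{\C^d}\exp\bigl(\tfrac{1}{2}\zeta\cdot\bar w+\tfrac{1}{2}\eta\cdot w\bigr)\,d\mu(w),\qquad (\zeta,\eta)\in\C^{2d},
$$
an entire function of exponential type. Using the reproducing kernels $P(\cdot,\zeta)\in\Fc^2$, the sesquilinear form \eqref{1:BargForm} gives
$$
\bigl(T_\mu^\Fc P(\cdot,\zeta),P(\cdot,\eta)\bigr)_{\Fc^2}=F(\zeta,\eta),
$$
so the assumption $\rank T_\mu^\Fc=N$ is equivalent to the vanishing of every $(N{+}1)\times(N{+}1)$ determinant $\det\bigl[F(\zeta_j,\eta_k)\bigr]$ as $\zeta_1,\dots,\zeta_{N+1},\eta_1,\dots,\eta_{N+1}$ range over $\C^d$. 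The problem thus becomes: classify entire functions on $\C^{2d}$ of this exponential form which have rank at most $N$ in the sense above, and show that they are necessarily of the form $F(\zeta,\eta)=\sum_{k=1}^N\lambda_k\exp\bigl(\tfrac12\zeta\cdot\bar w_k+\tfrac12\eta\cdot w_k\bigr)$; once this structure is established, uniqueness of the Fourier-Laplace transform immediately forces $\mu=\sum_k\lambda_k\delta_{w_k}$.

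For the remaining classification statement I would proceed by induction on $d$, with the base case $d=1$ supplied by Luecking's theorem \cite{Lue2}. For the inductive step, freeze the last $d-1$ components of $\zeta$ and $\eta$ and apply a suitable test to identify a one-dimensional Bargmann-Toeplitz operator associated with the measure obtained by pairing $\mu$ with appropriate exponential factors in the frozen variables; this operator inherits a rank bound from $T_\mu^\Fc$, so each slice of $\mu$ in the first coordinate is concentrated on at most $N$ points of $\C$. The main technical obstacle is the gluing step: the positions and weights of these slice-supports depend on the frozen variables, and I must show they organize into a finite set of points in $\C^d$ rather than continuous families of them. I expect this to be handled by exploiting that $F$ is jointly entire and of finite exponential type, together with a several-variable analogue of Kronecker's theorem on finite-rank kernels, which forces $F$ to separate as a finite sum of products of exponentials in each group of variables; after this rigidity statement the weights appear as functions of the remaining $d-1$ variables that again satisfy a finite-rank hypothesis, letting the induction close. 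The rigidity/gluing step is where I anticipate essentially all of the serious work to lie, and where the genuinely multidimensional character of the problem (absent in the $d=1$ argument of \cite{Lue2}) must be confronted.
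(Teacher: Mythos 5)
You have two genuine gaps, and the second sits exactly where the multidimensional difficulty of the problem lies. First, your reduction of the Bergman case to the Bargmann case does not work as described. Finite rank of $T_\mu^\O$ does not pass to a localized piece $\mu'=\mu|_E$: the difference $T_\mu^\O-T_{\mu'}^\O$ is the Toeplitz operator of $\mu|_{\O\setminus E}$, and there is no reason for it to have finite rank, so your claim that localization costs only ``finite-rank corrections'' is unjustified. Moreover, on a fixed interior polydisc there is no expansion identifying $B(z,w)$ with the Bargmann kernel up to nonvanishing holomorphic factors; such comparisons belong to semiclassical or weighted regimes, not to a fixed bounded domain. The paper's reduction goes in the opposite direction and is elementary: both operators determine the same moment matrix $A(\m)$ of \eqref{2:matrix}; finite rank of the operator forces $\rank A(\m)\le\rank T$ by restricting $T$ to polynomials (Lemma \ref{PolyLemma}), and conversely, in the Bargmann space and in a polydisk, polynomial approximation recovers the rank of $T$ from $A(\m)$ (Lemmas \ref{2:lemma.polydisk}, \ref{lemmaBarg}). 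This converts both settings into a single statement about the moments of $\m$, with no kernel analysis at all; for a general domain $\O$, where polynomials need not be dense, equality of the ranks is obtained only a posteriori, in Proposition \ref{2:lemma.arb}.

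Second, in the Bargmann case your induction correctly names the obstacle --- the at most $N$ support points of the one-dimensional slices depend on the frozen variables --- but you then appeal to a hoped-for ``several-variable analogue of Kronecker's theorem'' forcing $F$ to split into a finite sum of exponential products. That rigidity assertion is essentially equivalent to the theorem you are trying to prove, so your argument is incomplete precisely at its crucial step; you acknowledge this, but nothing in the finite-exponential-type structure of $F$ alone is known to supply it. The paper closes this gap by a different device: the inductive statement is strengthened to the whole family of measures $\m_g=|g|^2\m$, with $g$ holomorphic and invertibly bounded near $\supp\m$ (Proposition \ref{Propos}); this hypothesis is stable under the pushforward $\pi_*$ along $z_1$, since any admissible function of $z'$ alone is admissible in dimension $d$. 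One then takes $g$ maximizing the number $M(g)$ of point masses of $\pi_*\m_g$, uses continuity of $g\mapsto\nu_g(\{\z_j\})=\int|g|^2\,d\m^j$ to freeze the mass locations for $g$ near the maximizer, subtracts the fiber measures, and annihilates the remainder $\widetilde{\m}$ by testing \eqref{zeromeasure} against $g_\e=1+\e g$ and applying Stone--Weierstrass; this puts $\supp\m$ on at most $N$ complex lines, and repeating along $z_d$ and once more after a generic unitary rotation cuts the $N^2$ candidate intersection points down to $N$. If you wish to keep your exponential-transform formulation, you must supply a gluing mechanism of comparable strength; as written, your plan proves the theorem only modulo the statement that constitutes its main content.
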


We should note that the condition of the measure to
have compact support cannot be dropped altogether. In
\cite{Vas} an example of a non-trivial measure $\m$
without compact support was constructed such that the
corresponding operator $T_\mu^\Fc$ is zero.

The authors express their thanks to Dan Luecking who
provided us with the copy of his new paper on a very
early stage  and to Alex Pushnitsky for co-operation
during the work on the ill-fated original text. The
second author was partially supported by a grant from
the Swedish Academy of Sciences. The results were
obtained while he was enjoying the hospitality of
Chalmers University of Technology, Gothenburg.

\section{General properties}\label{2:soft}
The first simple observation, used in a slightly
different form in \cite{Lue2}, shows that the finite
rank property implies the finite rank for a certain
infinite matrix. For  the given measure $\m$ we
consider the infinite matrix
\begin{equation}\label{2:matrix}A(\m)=(a_{\a\b}): a_{\a\b}=\int z^\a \bar{z}^\b
d\mu(z),\end{equation}
 where
$\a,\b$ are multi-indices in $\Z_+^d$.
\begin{lemma}\label{PolyLemma} Let the operator $T=T_\m^\Fc$ or $T=T_\m^\O$
have finite rank. Then the matrix $A(\m)$ has finite
rank, $\rank(A(\m))\le \rank(T)$\end{lemma}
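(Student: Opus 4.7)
The plan is to exhibit the entries $a_{\alpha\beta}$ of $A(\mu)$ as inner products of the form $(T z^\alpha, z^\beta)$ in the relevant Hilbert space, and then deduce the rank inequality from a linear-dependence argument applied to the vectors $Tz^\alpha$.

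First I would check that the monomials $z^\alpha$, $\alpha\in\Z_+^d$, lie in the space on which $T$ acts. For the Bargmann case $z^\alpha\in\Fc^2$ is standard (these monomials form an orthogonal basis). For the Bergman case, $\O$ is bounded, so every monomial is bounded and holomorphic on $\O$, hence belongs to $H^2(\O)$.

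Next I would read off the sesquilinear form. For the Bergman operator, \eqref{1:BergQform} gives directly
\begin{equation*}
(T^\O_\mu z^\alpha, z^\beta)_{H^2(\O)}
 = \int z^\alpha \overline{z^\beta}\, d\mu(z)= a_{\alpha\beta}.
\end{equation*}
For the Bargmann operator, starting from \eqref{1:BargMeas} and using the reproducing property of the Bargmann kernel \eqref{1:Bkernel}, namely $\overline{v(w)}=\int P(z,w)\overline{v(z)}\,d\g(z)$ for $v\in\Fc^2$, I would compute
\begin{equation*}
(T_\mu^{\Fc} u,v)_{\Fc^2}
=\int_{\C^d}\int_{\C^d} u(w)P(z,w)\,d\mu(w)\,\overline{v(z)}\,d\g(z)
=\int_{\C^d} u(w)\overline{v(w)}\,d\mu(w),
\end{equation*}
so that again $(T_\mu^{\Fc} z^\alpha, z^\beta)_{\Fc^2}=a_{\alpha\beta}$. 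Thus in both cases $A(\mu)$ is the Gram-type matrix of $T$ against the family of monomials.

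Finally I would deduce the rank bound. Set $N=\rank T$. For any $N+1$ multi-indices $\alpha_1,\dots,\alpha_{N+1}$, the vectors $Tz^{\alpha_1},\dots, Tz^{\alpha_{N+1}}$ lie in the $N$-dimensional range of $T$, so they are linearly dependent: there exist coefficients $c_1,\dots,c_{N+1}$, not all zero, with $\sum_j c_j\, Tz^{\alpha_j}=0$. Pairing with $z^\beta$ for arbitrary $\beta\in\Z_+^d$ gives
\begin{equation*}
\sum_{j=1}^{N+1} c_j\, a_{\alpha_j,\beta}=0
\qquad \text{for every } \beta\in\Z_+^d.
\end{equation*}
Hence any $N+1$ rows of $A(\mu)$ are linearly dependent, so $\rank A(\mu)\le N=\rank T$. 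There is no real obstacle here; the only small point requiring care is the interchange of integrations used to derive the quadratic form in the Bargmann case, which is justified by Fubini's theorem since $\mu$ has compact support, the kernel $P(z,w)$ is locally bounded, and the Gaussian weight provides the integrability at infinity.
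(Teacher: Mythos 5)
Your proof is correct and follows essentially the same route as the paper's (very terse) argument: both identify the entries $a_{\alpha\beta}$ with the sesquilinear form values $(Tz^{\alpha},z^{\beta})$ and bound the rank of this Gram-type matrix by the dimension of $\Ran T$. You merely make explicit what the paper leaves implicit, namely that the monomials lie in the relevant spaces and that Fubini (justified by the compact support of $\mu$ and the Gaussian decay) converts \eqref{1:BargMeas} into the quadratic form $\int u\overline{v}\,d\mu$ in the Bargmann case.
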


In fact, if we had $\rank (A(\m))>N=\rank(T)$, this
would mean that for some $M$ the dimension of the
range of the operator $T$ restricted to the
finite-dimensional subspace consisting of polynomials
of degree less than $M$ is greater than the dimension
of the range of $T$ itself, which is impossible.

At the moment we cannot establish the converse to
Lemma \ref{PolyLemma} in full generality. This result
will follow from Theorem \ref{Theorem}, as it will be
explained later. However for some nice domains $\O$ we
can prove the converse right now.

\begin{lemma}\label{2:lemma.polydisk} Let the domain
$\O$ be a polidisk in $\C^d$. Then $\rank(A(\m))=\rank
(T_\mu^\O)$.\end{lemma}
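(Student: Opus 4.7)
The inequality $\rank(A(\mu))\le\rank(T_\mu^\O)$ is already supplied by Lemma \ref{PolyLemma}, so my task is to establish the reverse inequality. The plan is to exploit the defining feature of a polydisk $\O=D_1\times\cdots\times D_d$: because the Lebesgue measure on each factor disk is rotation invariant, the monomials $\{z^\a\}_{\a\in\Z_+^d}$ form an orthogonal and complete system in $H^2(\O)$. Concretely, via the tensor product decomposition $H^2(\O)\cong H^2(D_1)\otimes\cdots\otimes H^2(D_d)$ together with the well-known fact that $\{z_j^{k}\}_{k\ge 0}$ is an orthogonal basis of $H^2(D_j)$, the normalized monomials $e_\a:=z^\a/\|z^\a\|_{H^2(\O)}$ constitute an orthonormal basis of $H^2(\O)$.

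Next I would compute the matrix of the Toeplitz operator in this orthonormal basis by appealing to the quadratic form \eqref{1:BergQform}:
\begin{equation*}
(T_\mu^\O e_\a, e_\b)_{H^2(\O)} \;=\; \frac{1}{\|z^\a\|\,\|z^\b\|}\int_\O z^\a\overline{z^\b}\,d\mu(z)\;=\;\frac{a_{\a\b}}{\|z^\a\|\,\|z^\b\|}.
\end{equation*}
Thus the matrix of $T_\mu^\O$ in the orthonormal basis $\{e_\a\}$ equals $DA(\mu)D$, where $D$ is the (invertible) diagonal operator with entries $\|z^\a\|^{-1}$ on the diagonal. Diagonal conjugation by an invertible operator preserves rank, so the operator $T_\mu^\O$ has the same rank as $A(\mu)$, finishing the proof.

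The only points deserving a line of justification are (i) that polynomials are dense in $H^2(\O)$ for the polydisk, which follows from the tensor product structure and the density of polynomials in each one-dimensional factor $H^2(D_j)$, and (ii) that the rank of a bounded operator on a separable Hilbert space coincides with the rank of its matrix in any orthonormal basis, which is standard. There is no genuine obstacle in this proof; the whole argument turns on the special symmetry of the polydisk that makes monomials orthogonal. The corresponding statement is \emph{not} expected to be elementary for a general domain $\O$, which is precisely why this converse of Lemma \ref{PolyLemma} must, in general, be deferred until after the main Theorem \ref{Theorem} is proved.
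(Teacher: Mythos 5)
Your argument is correct in substance but follows a genuinely different route from the paper's. The paper proceeds by contradiction plus approximation: if $\rank T_\mu^\O > M=\rank A(\m)$, there are $M+1$ functions $u_j\in H^2(\O)$ for which the matrix $\bigl((Tu_j,u_k)\bigr)$ is nonsingular; since in a polydisk every holomorphic function is approximated uniformly on compact sets by the partial sums of its Taylor series, and $\m$ has compact support in $\O$, replacing the $u_j$ by approximating polynomials $p_j$ yields matrices $\bigl((Tp_j,p_k)\bigr)$ of rank at most $M$, hence singular, which cannot converge to a nonsingular matrix. That argument uses only the polynomial approximation property and no symmetry, which is why the paper can reuse it verbatim for the Bargmann case (Lemma \ref{lemmaBarg}). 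Your argument instead exploits the rotational symmetry of the polydisk to make the monomials an orthogonal basis, and then identifies the matrix of $T_\mu^\O$ in that basis as $DA(\m)D$ with $D$ diagonal with nonzero entries, so the two ranks coincide by an algebraic row-and-column scaling. Where it applies, your route is sharper and more transparent than the paper's, since it exhibits the operator's matrix explicitly rather than arguing by contradiction; note only that $D$, having entries $\|z^\a\|^{-1}\to\infty$, is unbounded on $\ell^2$, so the rank equality should be read algebraically (scaling rows and columns by nonzero scalars preserves rank), and that your appeal to ``rank of a bounded operator equals rank of its matrix in an orthonormal basis'' implicitly uses the boundedness of $T_\mu^\O$, which holds because $\m$ is finite with compact support in $\O$.

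One caveat needs patching: the orthogonality of the monomials $z^\a$ holds only when each factor disk is centered at the origin, whereas the lemma is stated for an arbitrary polydisk. For a polydisk centered at $c\ne 0$ your basis is no longer orthogonal and the clean identity $DA(\m)D$ fails as written. This is fixable in one line: the rank of $A(\m)$ is invariant under the translation $z\mapsto z+c$, because $(z+c)^\a=\sum_{\g\le\a}\binom{\a}{\g}c^{\a-\g}z^\g$ is an invertible, degree-filtration-preserving change of basis, so on each finite truncation by total degree $A$ transforms by congruence with invertible triangular matrices; alternatively, for the paper's only application of the lemma (reducing the Bargmann case to the Bergman case) one may simply choose the polydisk centered at the origin. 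With that addendum your proof is complete.
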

\begin{proof} We must show that if
$\rank(A(\m))=M<\infty$ then $\rank(T)\le M$,
$T=(T_\mu^\O))$. Suppose that $\rank(T)> M$. Then
there must exist $M+1$ functions $u_1,\dots
,u_{M+1}\in H^2(\O)$ such that the $Tu_j$ are linearly
independent. So the matrix $\tilde{A}(\mu)$ with
entries
\begin{equation}\label{2:matrixTilde}
    \tilde{a}_{j,k}=(Tu_j,u_k)=\int
    u_j(z)\overline{u_k(z)}d\m(z),\ 1\le j,k\le M+1,
\end{equation}
is nonsingular. However in a polidisk any holomorphic
function can be uniformly on compact sets approximated
by polynomials, the starting segments of the Taylor
series. If $p_j$ are polynomials approximating $u_j$,
the matrix with elements $(Tp_j,p_k)$ has rank not
greater than $M$ and therefore singular, so it cannot
approximate the nonsingular matrix
$\tilde{A}(\mu)$.\end{proof}
 The same reasoning, using this approximation property and density
  of polynomials in the Bargmann space  proves the similar statement for
 the Bargmann operators.
 \begin{lemma}\label{lemmaBarg}
 For a measure $\mu$ with compact support $\rank(A(\m))=\rank
(T_\mu^\Fc)$.\end{lemma}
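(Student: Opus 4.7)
The plan is to mirror the proof of Lemma \ref{2:lemma.polydisk}, with the polynomial-approximation property of the polydisk replaced by the analogous property of the Bargmann space $\Fc^2$. Since Lemma \ref{PolyLemma} already provides $\rank(A(\mu)) \le \rank(T_\mu^\Fc)$, only the reverse inequality must be established. Setting $M:=\rank(A(\mu))$, I argue by contradiction and assume $\rank(T_\mu^\Fc) \ge M+1$.

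First, I translate this rank assumption into the nonsingularity of a Gram-type matrix. Choose $u_1,\dots,u_{M+1}\in\Fc^2$ whose images $T_\mu^\Fc u_j$ are linearly independent, and set $v_k:=T_\mu^\Fc u_k \in \Fc^2$. Then
\begin{equation*}
\tilde{A} \;=\; \bigl((T_\mu^\Fc u_j,\,v_k)_{\Fc^2}\bigr)_{j,k=1}^{M+1} \;=\; \Bigl(\int_{\C^d} u_j(z)\,\overline{v_k(z)}\,d\mu(z)\Bigr)_{j,k=1}^{M+1}
\end{equation*}
is the Gram matrix of the linearly independent vectors $T_\mu^\Fc u_j$, and hence is nonsingular.

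The key fact specific to $\Fc^2$ is that polynomials are dense in it and that $\Fc^2$-norm convergence implies uniform convergence on compact subsets of $\C^d$. The latter follows from the reproducing-kernel pointwise bound $|f(z)| \le P(z,z)^{1/2}\|f\|_{\Fc^2} = e^{|z|^2/4}\|f\|_{\Fc^2}$ with $P$ the kernel \eqref{1:Bkernel}. Picking polynomial sequences $p_j^{(n)}\to u_j$ and $q_k^{(n)}\to v_k$ in the $\Fc^2$-norm, I obtain uniform convergence of each sequence on the compact set $\supp\mu$; since $|\mu|$ is a finite measure, dominated convergence yields the entrywise limit
\begin{equation*}
\int_{\C^d} p_j^{(n)}(z)\,\overline{q_k^{(n)}(z)}\,d\mu(z) \;\longrightarrow\; \tilde a_{jk}, \qquad n\to\infty.
\end{equation*}

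Finally, expanding $p_j^{(n)}(z)=\sum_\alpha C_{j\alpha}^{(n)}z^\alpha$ and $q_k^{(n)}(z)=\sum_\beta D_{k\beta}^{(n)}z^\beta$, the $(j,k)$ entry above equals $\sum_{\alpha,\beta} C_{j\alpha}^{(n)}\overline{D_{k\beta}^{(n)}}\,a_{\alpha\beta}$, so that the approximating matrix factors as $C^{(n)}A(\mu)(D^{(n)})^{*}$ and thus has rank at most $\rank(A(\mu))=M$. Since the set of matrices of rank $\le M$ is closed in the entrywise topology (being cut out by the vanishing of all $(M+1)\times(M+1)$ minors), $\tilde A$ must also have rank $\le M$, contradicting its being a nonsingular $(M+1)\times(M+1)$ matrix. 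The only subtlety I anticipate is the passage from $\Fc^2$-norm convergence to uniform convergence on $\supp\mu$, which is precisely what the reproducing-kernel estimate delivers; once that is in place, the argument is entirely parallel to the polydisk case.
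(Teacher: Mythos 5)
Your proof is correct and follows essentially the same route as the paper, which proves the Bargmann case by repeating the polydisk argument of Lemma \ref{2:lemma.polydisk} using density of polynomials in $\Fc^2$ together with the reproducing-kernel bound to pass from norm convergence to uniform convergence on $\supp\mu$. Your choice $v_k := T_\mu^\Fc u_k$, making $\tilde A$ a Gram matrix, is a nice touch that makes the nonsingularity immediate (the paper's version with $(Tu_j,u_k)$ asserts this without comment), but the argument is otherwise the same.
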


 Lemma \ref{2:lemma.polydisk}
enables us to reduce the Bargmann case to the Bergman
one. Suppose that for some measure $\mu$ the operator
$T_\m^\Fc$ has finite rank. Then, by Lemma
\ref{PolyLemma}, the matrix $A(\m)$ has finite rank.
Consider a polidisk $\O$ containing the support of
$\m$ in a compact set inside. Since the matrix $A(\m)$
is the same for operators $T_\m^\Fc$ and $T_\m^\O$, it
follows from Lemma \ref{2:lemma.polydisk} that
$T_\m^\O$ has finite rank.

So it is sufficient to prove Theorem \ref{Theorem} for
the Bergman case only. We will suppress the sperscript
$\O$ further  on.

Now we introduce a functional parameter. Let $g(z)$ be
a bounded holomorphic function in $\O$. We consider
the measure $\m_g=|g|^2\m$.

\begin{lemma} Suppose that the operator $T_\m$ has
finite rank. Than the operator $T_{\m_g}$ has finite
rank as well and $\rank(T_{\m_g})\le \rank(T_{\m})$.
If the function $g(z)^{-1}$ is also bounded and
holomorphic than $\rank(T_{\m_g}^\O)=
\rank(T_{\m}^\O)$.\end{lemma}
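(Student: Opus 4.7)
The plan is to reduce the claim to a simple intertwining identity between $T_{\mu_g}$ and $T_\mu$ via the multiplication operator by $g$.

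First I would observe that since $g \in H^\infty(\Omega)$, the multiplication map $M_g : u \mapsto gu$ sends $H^2(\Omega)$ boundedly into itself: for any $u \in H^2(\Omega)$, $gu$ is holomorphic on $\Omega$ and $\|gu\|_{L^2(\Omega)} \le \|g\|_\infty \|u\|_{L^2(\Omega)}$. So $M_g \in \bopp{H^2(\Omega)}$ with $\|M_g\| \le \|g\|_\infty$, and hence its adjoint $M_g^*$ is also bounded on $H^2(\Omega)$. Note also that $\mu_g = |g|^2 \mu$ inherits compact support from $\mu$, so $T_{\mu_g}$ is well defined.

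Next I would use the sesquilinear form description \eqref{1:BergQform}: for $u,v \in H^2(\Omega)$,
\begin{equation*}
(T_{\mu_g} u, v) \;=\; \int u(z)\overline{v(z)}\, |g(z)|^2 d\mu(z)
\;=\; \int (gu)(z)\,\overline{(gv)(z)}\,d\mu(z) \;=\; (T_\mu M_g u, M_g v),
\end{equation*}
which gives the operator identity
\begin{equation*}
T_{\mu_g} \;=\; M_g^{\,*}\, T_\mu\, M_g.
\end{equation*}
Since $\rank(AB) \le \rank(B)$ and $\rank(BA) \le \rank(B)$ for any bounded operators, this identity immediately yields $\rank(T_{\mu_g}) \le \rank(T_\mu)$, and in particular $T_{\mu_g}$ has finite rank whenever $T_\mu$ does.

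For the converse inequality, suppose in addition that $g^{-1} \in H^\infty(\Omega)$. Then $M_{g^{-1}}$ is bounded on $H^2(\Omega)$ and is a two-sided inverse of $M_g$, so $M_g$ is boundedly invertible on $H^2(\Omega)$, and therefore so is $M_g^*$. Multiplying the displayed identity from the left by $(M_g^*)^{-1}$ and from the right by $M_g^{-1}$ gives
\begin{equation*}
T_\mu \;=\; (M_g^*)^{-1}\, T_{\mu_g}\, M_g^{-1},
\end{equation*}
whence $\rank(T_\mu) \le \rank(T_{\mu_g})$ and equality holds. There is no real obstacle here; the only thing to take some care with is that $g u$ actually lies in $H^2(\Omega)$ (which is where boundedness and holomorphy of $g$ are both used), and that the support of $\mu_g$ is still compact in $\Omega$, so that the Toeplitz operator $T_{\mu_g}$ fits into the same framework.
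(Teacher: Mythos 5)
Your proof is correct and follows essentially the same route as the paper: both identify $T_{\mu_g}$ with a sandwich of $T_\mu$ between multiplication operators via the quadratic form \eqref{1:BergQform}, use that bounded operators cannot increase rank, and invoke invertibility of $M_g$ when $g^{-1}$ is bounded holomorphic. If anything, your write-up is slightly more careful than the paper's, which loosely writes the sandwich as $u\mapsto gT_\mu^\O gu$ where the precise statement is $T_{\mu_g}=M_g^*\,T_\mu\,M_g$, exactly as you derived.
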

\begin{proof} By considering  the quadratic forms,  we see that operator $u\mapsto gT_{\m}^\O
gu$ coincides with $T_{\m_g}^\O$, the multiplication
by a bounded analytical function, being a bounded
operator in the Bergman space, cannot increase the
rank, and therefore the first statement follows. The
second statement is now obvious.\end{proof}

\section{Proof of the main result}

We are going to prove Theorem \ref{Theorem} using
induction in dimension. It will follow from a more
general result.

\begin{proposition}\label{Propos} Let $\mu$ be a
regular Borel measure  with compact support in $C^d$.
Suppose that for any  function $g(z)$, bounded and
holomorphic in a fixed polidisk neighborhood of the
support of $\mu$ such that $g(z)^{-1}$ is also bounded
and holomorphic, the matrix $A(\mu_g)$ has rank $N$.
Then the measure $\mu$ is a linear combination of  $N$
point masses.\end{proposition}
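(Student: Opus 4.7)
The plan is to prove Proposition~\ref{Propos} by induction on the dimension $d$. The crucial simplification is that, combining Lemma~\ref{2:lemma.polydisk} with the multiplication lemma of Section~\ref{2:soft}, one has
\begin{equation*}
\rank A(\mu_g)=\rank T_{\mu_g}^\Omega=\rank T_\mu^\Omega=\rank A(\mu)
\end{equation*}
for any bounded invertible holomorphic $g$ on $\Omega$; so the hypothesis of the proposition is equivalent to $\rank A(\mu)=N$, and the base case $d=1$ is immediate from Luecking's theorem \cite{Lue2}.

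For the induction step I first slice in the $z_d$-direction. The pushforward $\nu=\pi_*\mu$ of $\mu$ under $\pi(z)=z_d$ has its one-variable moment matrix equal to a principal submatrix of $A(\mu)$, so $\rank A(\nu)\le N$. Luecking's theorem then yields that $\nu$ is a combination of $k\le N$ point masses at distinct points $w_1,\dots,w_k\in\C$, whence $\supp\mu\subset\bigcup_{j=1}^k H_j$ with $H_j=\{z\in\C^d:z_d=w_j\}\simeq\C^{d-1}$, and $\mu=\sum_j\mu_j$ with $\mu_j$ supported on $H_j$.

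Next is a rank decomposition formula. For any $\tilde g(z')$ (with $z'=(z_1,\dots,z_{d-1})$) bounded and invertible holomorphic on the projection of $\Omega$ to $\C^{d-1}$, set $g(z',z_d):=\tilde g(z')$, which is bounded and invertible holomorphic on $\Omega$. Since $z_d\equiv w_j$ on $H_j$, writing multi-indices as $\alpha=(\alpha',\alpha_d)$,
\begin{equation*}
A(\mu_g)_{(\alpha',\alpha_d),(\beta',\beta_d)}=\sum_{j=1}^k w_j^{\alpha_d}\bar w_j^{\beta_d}\,A(\mu_{j,\tilde g})_{\alpha',\beta'}.
\end{equation*}
The Vandermonde independence of $(w_j^n)_{n\ge 0}$ for distinct $w_j$---equivalently, the surjectivity (with form-annihilating kernel $\bigl(\prod_j(z_d-w_j)\bigr)\C[z',z_d]$) of the restriction map $\C[z',z_d]\to\bigoplus_j\C[z']$, $P\mapsto(P(\cdot,w_j))_j$---gives
\begin{equation*}
\rank A(\mu_g)=\sum_{j=1}^k\rank A(\mu_{j,\tilde g}).
\end{equation*}
The left side equals $N$, and by the $g$-invariance observation applied in dimension $d-1$ each summand equals a constant $N_j=\rank A(\mu_j)$; hence $\sum_j N_j=N$, and each $\mu_j$ satisfies the hypothesis of the proposition in dimension $d-1$ with rank $N_j$. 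By the induction hypothesis $\mu_j$ is a combination of $N_j$ point masses, and $\mu=\sum_j\mu_j$ is a combination of $N$ point masses.

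I expect the main obstacle to be justifying the rank decomposition cleanly: the inequality $\rank A(\mu_g)\le\sum_j\rank A(\mu_{j,\tilde g})$ is routine from the tensor structure, but the reverse requires showing that the range of $A(\mu_g)$ fills the full direct sum, which is precisely where the Vandermonde/Lagrange-interpolation argument enters. A secondary concern is matching polydisk neighborhoods across dimensions (the projection of $\Omega$ onto $\C^{d-1}$ serves as a polydisk neighborhood of $\supp\mu_j$) and handling complex signed measures, but both are addressed by the operator-theoretic framework of Section~\ref{2:soft}.
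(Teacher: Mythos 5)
There is a genuine gap, and it sits at the very first step of your induction: the inference ``$\nu=\pi_*\mu$ is a combination of $k\le N$ point masses, \emph{whence} $\supp\mu\subset\bigcup_{j=1}^k H_j$.'' For a \emph{complex} (or signed) measure the support of a pushforward does not control the support of the measure, because mass over a fiber can cancel: the inclusion $\supp\mu\subseteq\pi^{-1}(\supp\pi_*\mu)$ holds for positive measures only. Concretely, in $\C^2$ take $\mu=\delta_{(0,0)}-\delta_{(1,0)}$ and $\pi(z_1,z_2)=z_2$. This $\mu$ satisfies the hypothesis of Proposition~\ref{Propos} with $N=2$ (both atoms persist under any twist $|g|^2$ with $g$ invertible), yet $\nu=\pi_*\mu=\delta_0-\delta_0=0$, so your argument gives $k=0$ and concludes $\mu=0$, which is false. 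Your opening observation --- that by Lemma~\ref{2:lemma.polydisk} and the multiplication lemma of Section~\ref{2:soft} the hypothesis is equivalent to $\rank A(\mu)=N$ alone --- is correct, but it misled you into treating the $g$-family as removable bookkeeping. It is not: the twists $|g|^2$ are exactly the device that defeats such cancellations (in the example, $\pi_*\bigl(|1+\epsilon z_1|^2\mu\bigr)=(1-|1+\epsilon|^2)\delta_0\neq0$), and your proposal never uses them where they are needed.

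This is precisely where the paper's proof does its real work, none of which appears in your proposal: for every admissible $g$ (depending on \emph{all} variables) the pushforward $\nu_g=\pi_*(|g|^2\mu)$ consists of at most $N$ atoms at points a priori depending on $g$; one fixes a $g$ maximizing the number of atoms and normalizes it to $1$; continuity of $g\mapsto\nu_g(\{\z_j(1)\})=\int|g|^2\,d\mu^j$ shows the atoms sit at the same points for all $g$ near $1$; hence $\int|g|^2\,d\widetilde{\mu}=0$ for $g$ near $1$, then for all $g$ via $g_\epsilon=1+\epsilon g$, and Stone--Weierstrass kills $\widetilde{\mu}=\mu-\sum\mu^j$. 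On the positive side, your fiber-rank additivity $\rank A(\mu_g)=\sum_j\rank A(\mu_{j,\tilde g})$, proved via Lagrange interpolation in $z_d$ and the Vandermonde independence of the vectors $(w_j^n)_{n\ge0}$, is correct \emph{once} a decomposition $\mu=\sum_j\mu_j$ over distinct fibers has been established, and it would in fact give a cleaner finish than the paper's own ending (which projects twice more, along $z_d$ and along a generically rotated direction, to cut the $\le N^2$ candidate intersection points down to $N$). So the proposal is repairable: graft the paper's maximality/continuity/Stone--Weierstrass argument in place of your slicing step, and your induction with rank additivity then goes through.
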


\begin{proof} The base of induction is the result by
D. Luecking in \cite{Lue2} for $ d=1$. In fact, it is
even sufficient here to have the finite rank of the
matrix $A(\mu)$ only, without the extra function $g$.

Now we perform the step of induction. Suppose that the
statement is proved in dimension $d-1$. Consider the
$d$-dimensional case.  We denote the co-ordinates in
$\C^d$ by $z=(z_1,z')$, $z'\in\C^{d-1}$.
 For a bounded holomorphic function
$g(z)$, $z\in D$, we denote $\m_g=|g|^2\m$.

Let $\pi$ denote the projection, $\pi(z_1,z')=z'$. For
the measure $\mu$ on $\C^d$ we will denote by
$\n=\pi_*\mu$ the induced measure on $\C^{d-1}$,
\begin{equation}\label{InducedMeasure}
    \n(E)=\m(\pi^{-1}E),
\end{equation}
for Borel sets $E\subset\C^{d-1}$. If $\mu$ is a
regular Borel measure, the same is true for $\n$.

In the matrix $A(\mu_g)$ we consider the sub-matrix
$A'(\mu_g)$ consisting of entries $a_{\a\b}$ with
multi-indices $\a,\b$ having zero as their first
component, $\a_1,\b_1=0$. Then the element $a_{\a\b}$
can be written as
\begin{equation}\label{submatrix}
a'_{\a'\b'}=a_{(0,\a')(0,\b')}=\int{z'}^{\a'}{\overline{z'}}^{\b'}\mu_g(z)=\int{z'}^{\a'}{\overline{z'}}^{\b'}\nu_g(z'),
\end{equation}
the last equality expressing the fact that the
integrand is independent of $z_1$.
 Since the matrix $A'(\mu_g)$ is a submatrix of the
finite rank matrix $A(\mu_g)$, its rank is not greater
than the rank of $A(\mu_g)$, and therefore
\begin{equation}\label{matrixrank}
\rank A'(\mu_g)\le N,
\end{equation}
for any function $g$ satisfying the conditions above.
This means that the  measure $\nu_{g}$ satisfies the
conditions of the inductive hypothesis in dimension
$d-1$, and therefore the measure $\nu_{g}$ is a finite
combination of point masses,
\begin{equation}\label{finite1}
    \nu_{g}=\sum_{j=1}^M \l_j\d(z'-\z_j),
\end{equation}
where $M\le N$, coefficients $\l_j\ne0$ and the points
$\z_j\in \C^{d-1}$ may depend, generally, on the
function $g$: $M=M(g)$, $\l_j=\l_j(g), \
\z_j=\z_j(g)$.

We are going to show now that, at least locally, the
points $\z_j$ do not depend on $g$.

The number $M(g)$, considered as a function of the set
of functions $g$, attains its maximal value,
$M_0=\max_g(M(g))\le N$ at some function $g=g_0$.
Without loss in generality, we can assume that already
$g_0=1$ gives the extremal value of $M(g)$ (otherwise,
we re-denote $\mu_{g_0}$ by $\m$). For any $j$, $1\le
j\le M_0$, we consider the measure $\mu^j$ in $\C^d$
defined as $\mu^j(G)=\mu(G\cap\pi^{-1}(\{\z_j(1)\}))$,
$G\subset\C^d$. So the measure $\mu^j$ is supported in
the pre-image of the point $\z_j(1)$ under the
projection $\pi$. Moreover, the coefficients $\l_j(1)$
in front of point masses $\d(z'-\z_j(1))$ are equal to
$\nu(\{\z_j(1)\})=\mu^j(\pi^{-1}\{\z_j(1)\})=\int 1
d\mu^j(z)$. Now, for a function $g$ as above, we can
express $\nu_g(\{\z_j(1)\})$ as
\begin{equation}\label{finite2}
\nu_g(\{\z_j(1)\})=\int |g(z)|^2 d\mu^j(z).
\end{equation}
By \eqref{finite2}, the quantities $\nu_g(\{\z_j(1)\}$
depend continuously on the function $g$ in the
topology of uniform convergence on the support of the
measure $\mu$. Thus, since they are not zero for
$g=1$, they are not zero for $g$ sufficiently close to
$1$ in the above topology, so, for such $g$, the point
masses at the point masses at $\z_j(1)$ are present in
the measure $\nu_g$ with nonzero coefficients. However
since there are $M_0$ points $\z_j(1)$, and $M_0$ is
the maximal possible quantity of the point masses in
$\nu_g$, this means that no more point masses appear.
Thus, for functions $g$ sufficiently close to $1$ in
uniform norm on the support of $\m$, the points
$\z_j(g)$ do not actually depend on $g$, in other
words, for such $g$ the measure $\n_g$ is a sum of
point masses placed at the points not depending on
$g$.

Now we consider the measure
$\widetilde{\m}=\m-\sum\m^j$, so $\widetilde{\m}(G)=0$
for any Borel set $G\subset\cup\pi^{-1}(\{\z_j(1)\})$.
Therefore $\pi_*\widetilde{\m}_g(\{\z_j(1)\})=0$ for
any $g$. At the same time, since for $g$ close to $1$
the support of $\nu(g)$ consists only of the points
$\z_j(1)$, we have $\pi_*\widetilde{\m}_g(E)=0$ for
any Borel set $E\subset\C^{d-1}$ not containing the
points $\z_j(1)$. Taken together, these two properties
mean that
\begin{equation}\label{zeroproj}
    \pi_*\widetilde{\m}_g=0
\end{equation}
for functions  $g$ sufficiently close to $1$. In
particular, we have
\begin{equation}\label{zeromeasure}
    \pi_*\widetilde{\m}_g(\C^{d-1})=\int|g(z)|^2d\widetilde{\m}(z)=0.
\end{equation}
Now we extend the equality \eqref{zeromeasure} from
functions $g$ which are close to $1$ to all functions
$g$, analytical in the polidisk neighborhood of the
support of $\m$. In fact, for a given function $g$ set
$g_\e=1+\e g$ and apply \eqref{zeromeasure} for
$g_\e$, with any $\e$ small enough. We obtain
\begin{equation}\label{zeomeasureeps}
\int(1+2\e\Re g(z)+\e^2|g(z)|^2)d\widetilde{\m}(z)=0.
\end{equation}
Due to the arbitrariness of a small $\e$,
\eqref{zeomeasureeps} implies \eqref{zeromeasure}.

Now we can show that the measure $\widetilde{\m}$ is,
in fact, zero. Consider the algebra generated by
functions $|g|^2$. This algebra, obviously, satisfies
all conditions of Stone-Weierstrass theorem, therefore
any function continuous on a compact set can be
uniformly on this compact approximated by linear
combinations of the functions of the form $|g|^2$.
Therefore, the relation  \eqref{zeromeasure} implies
that $\int f(z)d\widetilde{\m}(z)=0$ for any
continuous function, and thus
\begin{equation}\label{measure is0}
\widetilde{\m}=0, \ \mu=
\sum\m^j=\sum\mu(G\cap\pi^{-1}(\{\z_j(1)\})).
\end{equation}
So, the support of the measure $\mu$ is contained in
no more than $N$ complex planes $z'=\z_j(1),
z_1\in\C^1$, $j=1,\dots,M_0$.

Now we can  repeat the same reasoning but considering
the splitting of co-ordinates $z=(z'',z_d)$ and the
corresponding projection $z\mapsto z''$. We obtain
that the support of the measure $\mu$ is contained in
no more than $N$ complex planes $z''=\xi_k, z_d\in
\C^1, k\le N$. Surely, the support of $\mu$ must lie
in the intersection of these planes, which gives us no
more than $N^2$ points $Q(j,k):z'=\z_j,z''=\xi_k$.
Finally, to reduce the quantity of points in the
support of the measure, we rotate the coordinates in
$C^d$ by means of a unitary matrix to that in new
co-ordinates $\o=(\o_1,\dots,\o_d=(\o_1,\o'))$ the
points $Q(j,k)$ all have different $\o'$ co-ordinates.
Repeating our reasoning for the third time, we obtain
that the points $Q(j,k)$ lie on no more than $N$
complex planes $\o'=\h_l$, and since each of these
planes contains no more than one of the points
$H(j,k)$, this means that actually no more than $N$
points belong to the support of the measure. It is
clear that the number of point masses is exactly $N$.
If there were fewer of them, then the rank of the
Toeplitz operator would be smaller than
$N$.\end{proof}

Finally we can establish the converse to Lemma
\ref{PolyLemma} for an arbitrary domain $\O$. It shows
that the rank of the operator does not decrease if we
restrict it to polynomials, even if polynomials are
not dense in the space of holomorphic functions.

\begin{proposition}\label{2:lemma.arb} Let $\O$ be an arbitrary  domain
 in $\C^d$. Then $\rank(A(\m))=\rank
(T_\mu^\O)$.\end{proposition}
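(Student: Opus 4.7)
The plan is to combine the three results we already have in hand: Lemma \ref{PolyLemma}, Lemma \ref{2:lemma.polydisk}, and Theorem \ref{Theorem}. Lemma \ref{PolyLemma} already supplies one half of the desired equality, $\rank(A(\mu))\le \rank(T_\mu^\O)$, so the entire task reduces to proving the reverse inequality $\rank(T_\mu^\O)\le \rank(A(\mu))$.

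If $\rank(A(\mu))=\infty$ there is nothing to prove, so assume $N:=\rank(A(\mu))<\infty$. The idea is to import the point-mass conclusion from a model domain in which it is already available. Pick any polidisk $D\subset \C^d$ containing the compact set $\supp\mu$; such a $D$ exists because $\supp\mu$ is compact, and it need not be related to $\O$ in any way. Since the matrix $A(\mu)$ defined by \eqref{2:matrix} involves only $\mu$ itself and not any ambient domain, it is literally the same object whether we consider it as attached to $T_\mu^D$ or to $T_\mu^\O$. Lemma \ref{2:lemma.polydisk} applied on $D$ then gives $\rank(T_\mu^D)=\rank(A(\mu))=N<\infty$, which puts us precisely in the hypothesis of Theorem \ref{Theorem} for the polidisk $D$. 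The theorem supplies the representation
$$\mu=\sum_{k=1}^{N}\lambda_k\,\delta_{z_k},\qquad \lambda_k\ne 0,\ \ z_k\in \supp\mu\subset \O.$$

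Finally I substitute this explicit form back into the integral representation \eqref{1:BergT} of $T_\mu^\O$ and read off
$$(T_\mu^\O u)(z)=\sum_{k=1}^{N}\lambda_k u(z_k)\,B^{\O}(z,z_k),$$
where $B^{\O}$ is the Bergman kernel of $\O$. Hence $\Range(T_\mu^\O)$ is contained in the linear span of the $N$ kernel sections $B^{\O}(\cdot,z_k)$, so $\rank(T_\mu^\O)\le N=\rank(A(\mu))$, which combined with Lemma \ref{PolyLemma} gives the claimed equality.

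I do not anticipate any serious obstacle. The only point that requires a moment's care is conceptual rather than technical: one must note that $A(\mu)$ is domain-independent, so that Lemma \ref{2:lemma.polydisk} and Theorem \ref{Theorem} can legally be invoked on an auxiliary polidisk $D$ different from the given $\O$, and that the point-mass locations $z_k$ extracted on $D$ automatically lie inside $\O$ because they must lie in $\supp\mu$. Once this is observed, the substitution into \eqref{1:BergT} is entirely formal.
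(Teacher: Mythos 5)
Your proof is correct and follows essentially the same route as the paper: invoke Theorem \ref{Theorem} to conclude that $\mu$ is a sum of $N$ point masses, then read off $\rank(T_\mu^\O)\le N$ from the explicit representation \eqref{rank}, with Lemma \ref{PolyLemma} supplying the reverse inequality. Your only addition is to spell out the auxiliary-polidisk step (via Lemma \ref{2:lemma.polydisk}) that legitimizes applying Theorem \ref{Theorem} when only $\rank(A(\mu))<\infty$ is known a priori --- a detail the paper's two-line proof leaves implicit, and which you were right to make explicit since the theorem's hypothesis concerns the operator, not the matrix.
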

 \begin{proof}
By Theorem \ref{Theorem}, the measure $\mu$ consists
of $N$ point masses, $N=\rank(A(\m))$, with nonzero
coefficients. But now the relation \eqref{rank} shows
that the operator $T_\mu^\O$ has the same rank.
 \end{proof}

\end{document}